\documentclass[a4paper, 11pt]{amsproc}

\usepackage{fullpage}
\usepackage{amsmath, amsthm, amssymb, mathtools, mathrsfs, stmaryrd}
\usepackage{ascmac}
\usepackage{comment}
\usepackage{bm}

\usepackage{hyperref}


\usepackage{graphicx}
\usepackage{here}
\usepackage{time}
\usepackage[abbrev]{amsrefs}

\usepackage{xcolor}
\usepackage[capitalize,nameinlink,noabbrev,nosort]{cleveref}
\hypersetup{
	colorlinks=true,       
	linkcolor=blue,          
	citecolor=blue,        
	filecolor=blue,      
	urlcolor=blue,           
}

\makeatletter
\@namedef{subjclassname@2020}{%
  \textup{2020} Mathematics Subject Classification}
\makeatother


\newtheorem{theoremcounter}{Theorem Counter}[section]

\theoremstyle{definition}
\newtheorem{definition}[theoremcounter]{Definition}

\theoremstyle{plain}
\newtheorem{lemma}[theoremcounter]{Lemma}

\newtheorem{corollary}[theoremcounter]{Corollary}
\newtheorem{conjecture}[theoremcounter]{Conjecture}
\newtheorem{theorem}[theoremcounter]{Theorem}

\numberwithin{equation}{section}


\newcommand{\Z}{\mathbb{Z}}
\newcommand{\Q}{\mathbb{Q}}

\newcommand{\C}{\mathbb{C}}

\def\st#1#2{\genfrac{[}{]}{0pt}{}{#1}{#2}}


\begin{document}

\title[]{Curious congruences for cyclotomic polynomials II} 

\author{Toshiki Matsusaka}
\address{Faculty of Mathematics, Kyushu University,
Motooka 744, Nishi-ku, Fukuoka 819-0395, Japan}
\email{matsusaka@math.kyushu-u.ac.jp} 

\author{Genki Shibukawa}
\address{Department of Mathematics, Graduate School of Science, Kobe University,
1-1, Rokkodai, Nada-ku Kobe 657-8501, Japan}
\email{g-shibukawa@math.kobe-u.ac.jp} 

\subjclass[2020]{Primary 11A25; Secondary 11R18}



\maketitle

\begin{abstract}
	We promote the recent research by Akiyama and Kaneko on the higher-order derivative values $\Phi_n^{(k)}(1)$ of the cyclotomic polynomials. This article focuses on Lehmer's explicit formula of $\Phi_n^{(k)}(1)/\Phi_n(1)$ as a polynomial of the Euler and Jordan totient functions over $\Q$. Then we prove Akiyama--Kaneko's conjecture that the polynomials have a specific simple factor.
\end{abstract}


\section{Introduction}

The coefficients and higher-order derivatives of cyclotomic polynomials have long been engaging mathematicians' interests. The $n$-th cyclotomic polynomial
\[
	\Phi_n(x) := \prod_{\substack{0 < k \leq n \\ (k,n) = 1}} \left(x - e^{2\pi i k/n}\right)
\]
is the minimal polynomial of the $n$-th primitive roots of unity over $\Q$. Since collecting all $n$-th roots of unity yields $x^n - 1 = \prod_{d \mid n} \Phi_d(x)$, the M\"{o}bius inversion leads to the equivalent expression
\[
	\Phi_n(x) = \prod_{d \mid n} (x^d-1)^{\mu(n/d)},
\]
where $\mu$ is the M\"{o}bius function. The study on the higher-order derivative values of $\Phi_n(x)$ at $x=1$, which is the subject of this article, goes back to V.-A. Lebesgue in 1859 \cite{Lebesgue1859} and H\"{o}lder in 1936~\cite{Holder1936}. The general foundations of this subject were established by Lehmer in 1966 \cite{Lehmer1966}. Since then, it has been studied continuously and extensively, as summarized in \cite{HerreraPoyatosMoree2021} and \cite{Sanna2022}.

Recently, Akiyama and Kaneko~\cite[Theorem 3]{AkiyamaKaneko2022} discovered ``curious congruences" for odd-order derivatives of cyclotomic polynomials. Let $\phi(n) = \deg \Phi_n(x)$ be the Euler totient function. Then, $2\Phi_n^{(3)}(1)$ is divisible by $\phi(n)-2$. Moreover, for $k > 1$, $\Phi_n^{(2k+1)}(1)$ is divisible by $\phi(n) - 2k$. 

According to Lehmer~\cite{Lehmer1966}, there explicitly exist polynomials $F_k(x_1, \dots, x_k) \in \Q[x_1, \dots, x_k]$ such that $\Phi_n^{(k)}(1)/\Phi_n(1) = F_k(\phi(n)/2, J_2(n)/4, \dots, J_k(n)/2k)$, where $J_k(n)$ is Jordan's totient function. The polynomials will be discussed in detail in~\cref{Section-3}, then Akiyama and Kaneko posed the following conjecture as an analogue of the above congruences.

\begin{conjecture}[{\cite[Conjecture 1]{AkiyamaKaneko2022}}]\label{main1}
	For any non-negative integer $k$, $F_{2k+1}(x_1, \dots, x_{2k+1})$ is divisible by $x_1 - k$ in $\Q[x_1, \dots, x_{2k+1}]$.
\end{conjecture}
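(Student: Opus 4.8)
The plan is to exploit the palindromic functional equation $\Phi_n(x)=x^{\phi(n)}\Phi_n(1/x)$ (valid for $n>1$), which becomes a clean reflection symmetry after passing to the coordinate $v=\log x$, and then to read the divisibility off a parity argument applied to a formal residue.

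First I would record the explicit shape of $F_k$ in the variable $v=\log x$. Starting from $\log\Phi_n(e^v)=\sum_{d\mid n}\mu(n/d)\log(e^{dv}-1)$ and inserting the classical expansion
\[
  \log\frac{e^z-1}{z}=\frac z2+\sum_{j\ge1}\frac{B_{2j}}{2j\,(2j)!}\,z^{2j},
\]
the contributions $\log d$ and $\log v$ collapse to $\log\Phi_n(1)$ (which cancels against the normalization) and to $0$ (since $\sum_{d\mid n}\mu(n/d)=0$ for $n>1$), while $\sum_{d\mid n}\mu(n/d)d^{m}=J_m(n)$ turns the remaining terms into Jordan totients. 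This yields, for $n>1$,
\[
  \frac{\Phi_n(e^v)}{\Phi_n(1)}=\exp\!\left(\frac{\phi(n)}2\,v+\sum_{j\ge1}\frac{B_{2j}}{2j\,(2j)!}\,J_{2j}(n)\,v^{2j}\right).
\]
Only $x_1=\phi(n)/2$ and the even-indexed totients occur, and they occur with a fixed $v$-parity. Writing $x=1+t$, i.e.\ $v=\log(1+t)$, and $J_{2j}(n)=4j\,x_{2j}$, this realizes Lehmer's polynomial as
\[
  F_k=k!\,[t^k]\,\mathcal P(t),\qquad
  \mathcal P(t):=(1+t)^{x_1}\exp\!\left(\sum_{j\ge1}\frac{2B_{2j}}{(2j)!}\,x_{2j}\,\bigl(\log(1+t)\bigr)^{2j}\right),
\]
so that $F_k\in\Q[x_1,x_2,x_4,\dots]$ and its entire $x_1$-dependence is carried by the single factor $(1+t)^{x_1}$.

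By the factor theorem it suffices to prove that $F_{2k+1}$ vanishes upon setting $x_1=k$. Put $B(v):=\exp\bigl(\sum_{j\ge1}\frac{2B_{2j}}{(2j)!}x_{2j}v^{2j}\bigr)$, an even power series in $v$ whose coefficients lie in $\Q[x_2,x_4,\dots]$. Substituting $x_1=k$ turns $(1+t)^{x_1}$ into $(1+t)^k$, so I must establish the polynomial identity $[t^{2k+1}]\bigl((1+t)^{k}B(\log(1+t))\bigr)=0$. I would write this coefficient as a formal residue and change variables by $t=e^v-1$ (a legitimate formal substitution with $t'(0)\ne0$, under which residues are invariant):
\[
  [t^{2k+1}]\mathcal P\big|_{x_1=k}
  =\operatorname{Res}_{t=0}\frac{(1+t)^kB(\log(1+t))}{t^{2k+2}}\,dt
  =\operatorname{Res}_{v=0}\frac{e^{(k+1)v}B(v)}{(e^v-1)^{2k+2}}\,dv .
\]
Call the last integrand $I(v)$. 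Since $B$ is even and $(e^{-v}-1)^{2k+2}=e^{-(2k+2)v}(e^v-1)^{2k+2}$ (the exponent $2k+2$ being even), a direct check gives $I(-v)=I(v)$. Thus $I$ is an even formal Laurent series, its coefficient of $v^{-1}$ is zero, the residue vanishes, and hence $F_{2k+1}\big|_{x_1=k}\equiv0$, so $(x_1-k)\mid F_{2k+1}$.

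The routine points to nail down are the identification of the generating series above with Lehmer's polynomials $F_k$ (guaranteed by the explicit construction recalled in \cref{Section-3}, or alternatively by uniqueness of $F_k$ coming from Zariski-density of the totient vectors) and the invariance of formal residues under the substitution $t=e^v-1$. The genuine obstacle, and the crux of the argument, is conceptual rather than computational: one must notice that the palindromic symmetry of $\Phi_n$ linearizes to the reflection $v\mapsto-v$ in the logarithmic coordinate, and that the specific value $x_1=k$ paired with the odd target order $2k+1$ is exactly what renders the residue integrand even, so that the sought coefficient is annihilated by parity. In the original variables $x_1,x_2,\dots$ this vanishing is opaque, which is presumably why the statement was only conjectured.
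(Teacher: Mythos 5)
Your proof is correct, and after the shared starting point it diverges genuinely from the paper's argument. The common ground is the generating function: your $\mathcal{P}(t)$ is precisely the series $P(\bm{x},t)$ of \cref{generating-thm} (note $(-\log(1+t))^{2\nu}=(\log(1+t))^{2\nu}$), so the identification $F_k=k!\,[t^k]\mathcal{P}$ comes straight from the paper's construction; this is the route to rely on, since your alternative via Zariski-density of the totient vectors is itself a nontrivial claim that would need its own proof. From that point on, the paper proves the stronger structural formula of \cref{Lehmer-obs}: the identity $\log(1+t)=2\sinh^{-1}\bigl(t/(2\sqrt{1+t})\bigr)$ lets one expand the even exponential factor in powers of $t^2/(1+t)$, yielding $F_k=(x_1)_k+2\sum_{m=1}^{\lfloor k/2\rfloor}B_{2m}\binom{k}{2m}(x_1-m)_{k-2m}\Omega_m(x_2,\dots,x_{2m})$, and the divisibility is then read off term by term, since each falling factorial $(x_1)_{2k+1}$ and $(x_1-m)_{2k+1-2m}$ (with $m\le k$) contains the factor $x_1-k$. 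You instead specialize to $x_1=k$, express the target coefficient as a formal residue, substitute $t=e^v-1$, and kill the residue by parity: $e^{(k+1)v}B(v)(e^v-1)^{-(2k+2)}$ is an even Laurent series exactly because $B$ is even and the exponent $2k+2$ is even. Both proofs ultimately rest on the same reflection symmetry $v\mapsto -v$ (the palindromy of $\Phi_n$), which in the paper is hidden in the evenness of the $\sinh^{-1}$-series defining $\Omega_m$; but your execution is leaner, requiring no Chebyshev-type polynomials $\Omega_m$ and no explicit formula, only coefficient extraction, the standard change-of-variables invariance of formal residues, and the factor theorem. What the paper's heavier route buys is the explicit decomposition itself: it is reused in \cref{Section-4} as \eqref{Fkn-explicit} to prove the integrality statement \cref{main2} and thereby re-derive the Akiyama--Kaneko congruences, none of which follows from your parity argument alone. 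A small dividend of your method worth recording: for general $x_1$ the same substitution shows that the reflection carries the integrand for $x_1$ to the one for $2k-x_1$, so in fact $F_{2k+1}(2k-x_1,x_2,\dots,x_{2k+1})=-F_{2k+1}(x_1,x_2,\dots,x_{2k+1})$, and your vanishing at the fixed point $x_1=k$ is the special case of this antisymmetry.
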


In this article, we prove this conjecture. The divisibility suggests a direct relationship with Akiyama--Kaneko's congruences, but it should be noted that \cref{main1} does not immediately yield them. In \cref{Section-4}, we penetrate a certain integrality property of the polynomial $F_k$ and give a new proof for the curious congruences as a corollary.

\section{Preliminaries}

We review the basic functions used in this article. Jordan's totient functions $J_k(n)$ are a generalization of the Euler totient function. We give its two equivalent definitions,
\begin{align}\label{Jordan-def}
	J_k(n) = n^k \prod_{p \mid n} (1-p^{-k}) = \sum_{d \mid n} \mu(n/d) d^k,
\end{align}
where $p$ runs over prime divisors of $n$ and $d$ does over all divisors of $n$. We easily see that $J_k(n)$ is even for any $k \geq 1$ and $n \geq 3$. The special case $\phi(n) = J_1(n)$ is the Euler totient function. For the details, see \cite[Chapter V.3]{Sivaramakrishnan1989}.

The Bernoulli numbers $B_n$ are defined by the generating series
\begin{align}\label{Bernoulli-def}
	\frac{t}{e^t - 1} = \sum_{n=0}^\infty B_n \frac{t^n}{n!},
\end{align}
or
\begin{align}\label{Bernoulli-int}
	\log \left(\frac{\sinh(t/2)}{t/2} \right) = \sum_{n=2}^\infty \frac{B_n}{n} \frac{t^n}{n!}
\end{align}
with $B_0 = 1$ and $B_1 = -1/2$.

The coefficients of the factorial notation $(x)_n = x(x-1)(x-2) \cdots (x-n+1)$ define the (unsigned) Stirling numbers of the first kind $\st{\cdot}{\cdot}$ by
\begin{align}\label{Stirling-def}
	(x)_n = \sum_{k=0}^n (-1)^{n+k} \st{n}{k} x^k.
\end{align}
It is also known that the exponential generating series of the Stirling numbers is given by
\begin{align}\label{Stirling-gen}
	\frac{(-\log(1-t))^k}{k!} = \sum_{n=k}^\infty \st{n}{k} \frac{t^n}{n!}
\end{align}
for $k \geq 0$. 
The generating function for the factorial notations is the following (binomial theorem): 
\begin{align}\label{Pochhammer}
	(1+t)^x = \sum_{n=0}^\infty (x)_n \frac{t^n}{n!} = \sum_{n=0}^\infty \binom{x}{n} t^n,
\end{align}
where $\binom{x}{n}:=\frac{(x)_{n}}{n!}$ is the binomial coefficient.

The basic properties of the Stirling numbers are expounded in great detail in \cite{ArakawaIbukiyamaKaneko2014}.

\section{Lehmer's observation and a proof of \cref{main1}}\label{Section-3}

For any $k \geq 1$, we introduce the polynomials $s_k(x_1, \dots, x_k) \in \Q[x_1, \dots, x_k]$ and $F_k(x_1, \dots, x_k) \in \Q[x_1, \dots, x_k]$ by
\begin{align}\label{sk-def}
	s_k(x_1, \dots, x_k) &:= \frac{2 (-1)^{k-1}}{(k-1)!} \sum_{m=1}^k B_m \st{k}{m} x_m , 
\end{align}
and 
\begin{align}\label{Fk-def}
	F_k(x_1, \dots, x_k) := k! \sum_{\substack{\lambda_1, \dots, \lambda_k \geq 0 \\ \lambda_1 + 2\lambda_2 + \cdots + k \lambda_k = k}} \prod_{j=1}^k \frac{(-s_j(x_1, \dots, x_j))^{\lambda_j}}{\lambda_j! j^{\lambda_j}}, \quad F_0 := 1, 
\end{align}
respectively. The first few examples $s_k(x_1, \dots, x_k)$ and $F_k(x_1, \dots, x_k)$ are given by
\begin{align*}
	s_1(x_1) &= - x_1,\\
	s_2(x_1, x_2) &= \frac{1}{3}(3x_1 - x_2),\\
	s_3(x_1, x_2, x_3) &= - \frac{1}{2}(2x_1 - x_2),\\
	s_4(x_1, \dots, x_4) &= \frac{1}{90}(90x_1 - 55x_2 + x_4),\\
	s_5(x_1, \dots, x_5) &= - \frac{1}{36}(36x_1 - 25x_2 + x_4),
\end{align*}
and 
\begin{align*}
	F_1(x_1) &= x_1,\\
	F_2(x_1, x_2) &= \frac{1}{3}(3x_1^2 - 3x_1 + x_2),\\
	F_3(x_1, x_2, x_3) &= (x_1-1)(x_1^2 - 2x_1 + x_2),\\
	F_4(x_1, \dots, x_4) &= \frac{1}{15} (15x_1^4 - 90x_1^3 + (30x_2 + 165) x_1^2 - (90x_2+90)x_1 + 5x_2^2 + 55x_2 - x_4),\\
	F_5(x_1, \dots, x_5) &= \frac{1}{3}(x_1 - 2) (3x_1^4 - 24x_1^3 + (10x_2 + 57)x_1^2 - (40x_2+36)x_1 +5x_2^2 + 25x_2 - x_4).
\end{align*}
We note that odd-numbered variables other than $x_1$ do not appear since the Bernoulli numbers satisfy $B_n = 0$ for any odd $n \geq 3$. 

By the generating function \eqref{Stirling-gen}, and the definitions of the polynomials $s_k(x_1, \dots, x_k)$ and $F_k(x_1, \dots, x_k)$, we have the following generating functions.

\begin{lemma}\label{generating-thm}
	We define the functions $Q(\bm{x}, t)$ and $P(\bm{x}, t)$ by
	\begin{align}\label{s-gen exp}
	\begin{split}
		Q(\bm{x}, t) &:= - 2 \sum_{n=1}^\infty \frac{B_{n}}{n!}  (-\log(1+t))^{n} x_{n}\\
			&= -\log(1+t) x_{1} - 2 \sum_{\nu=1}^\infty \frac{B_{2\nu}}{(2\nu)!}  (-\log(1+t))^{2\nu} x_{2\nu} \in \Q[x_1, x_2, x_4, x_6, \dots] \llbracket t \rrbracket
	\end{split}	
	\end{align}
	and 
	\begin{align}\label{F-gen exp}
	\begin{split}
		P(\bm{x}, t) &:= \exp \left(- Q(\bm{x}, t) \right)\\
			&= (1+t)^{x_1} \exp \left(2 \sum_{\nu=1}^\infty \frac{B_{2\nu}}{(2\nu)!}  (-\log(1+t))^{2\nu} x_{2\nu} \right) \in \Q[x_1, x_2, x_4, x_6, \dots] \llbracket t \rrbracket .
	\end{split}
	\end{align}
	Then, we have 
	\begin{align}\label{s-gen}
		Q(\bm{x}, t) &= \sum_{j=1}^\infty \frac{s_j(x_1, \dots, x_j) t^j}{j}, \\
		\label{F-gen}
		P(\bm{x}, t) &= \sum_{k=0}^\infty F_k(x_1, \dots, x_k) \frac{t^k}{k!}.
	\end{align}
\end{lemma}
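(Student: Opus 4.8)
The plan is to establish the two series identities \eqref{s-gen} and \eqref{F-gen} in turn, since \eqref{F-gen} follows formally from \eqref{s-gen} once the latter is in hand. Throughout I work in the formal power series ring $\Q[x_1, x_2, \dots]\llbracket t \rrbracket$, in which each power of $t$ receives contributions from only finitely many terms, so all the rearrangements below are legitimate. The two right-hand expressions displayed for $Q$ and $P$ in \eqref{s-gen exp} and \eqref{F-gen exp} need no separate argument: they come from $B_1 = -1/2$ (which turns the $n=1$ term into $-\log(1+t)\,x_1$) together with the vanishing $B_n = 0$ for odd $n \geq 3$, and from $\exp(\log(1+t)\,x_1) = (1+t)^{x_1}$.

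For \eqref{s-gen} I would first substitute $t \mapsto -t$ into the Stirling generating function \eqref{Stirling-gen} to obtain
\[
	(-\log(1+t))^n = n! \sum_{j=n}^\infty (-1)^j \st{j}{n} \frac{t^j}{j!}.
\]
Inserting this into the defining series \eqref{s-gen exp} for $Q$ and interchanging the two summations (the outer sum over $n \geq 1$ with the inner sum over $j \geq n$ becomes an outer sum over $j \geq 1$ with inner sum over $1 \leq n \leq j$) yields
\[
	Q(\bm{x}, t) = -2 \sum_{j=1}^\infty \frac{(-1)^j t^j}{j!} \sum_{n=1}^j B_n \st{j}{n} x_n.
\]
Comparing the coefficient of $t^j$ with $s_j/j$, one checks directly that it equals $\frac{2(-1)^{j-1}}{j!} \sum_{n=1}^j B_n \st{j}{n} x_n = s_j(x_1, \dots, x_j)/j$ by the definition \eqref{sk-def}, which is exactly \eqref{s-gen}.

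For \eqref{F-gen} I would feed \eqref{s-gen} into $P = \exp(-Q)$ and factor the exponential of a sum into a product over $j$,
\[
	P(\bm{x}, t) = \exp \left( \sum_{j=1}^\infty \frac{-s_j}{j} t^j \right) = \prod_{j=1}^\infty \exp \left( \frac{-s_j}{j} t^j \right) = \prod_{j=1}^\infty \sum_{\lambda_j \geq 0} \frac{(-s_j)^{\lambda_j}}{\lambda_j!\, j^{\lambda_j}} t^{j\lambda_j}.
\]
Extracting the coefficient of $t^k$ amounts to summing over all tuples $(\lambda_1, \lambda_2, \dots)$ of non-negative integers with $\sum_j j\lambda_j = k$; since only indices $j \leq k$ can contribute, this reproduces precisely the summation range in \eqref{Fk-def}. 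Hence $[t^k]\,P = F_k(x_1, \dots, x_k)/k!$, which is \eqref{F-gen}, and the term $k=0$ matches the convention $F_0 = 1$.

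There is no genuine obstacle here, as both claims are formal power series manipulations; the only real care needed is bookkeeping: tracking the sign introduced by $t \mapsto -t$ and the factor $(-1)^j$ in the Stirling step, and confirming that the partition-type index set $\{\sum_j j\lambda_j = k\}$ arising from the product of exponentials coincides with the one in \eqref{Fk-def}. I would write out \eqref{s-gen} in full detail and then present \eqref{F-gen} as the routine exponentiation of that first identity.
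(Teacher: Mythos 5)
Your proposal is correct and matches the paper's intended argument: the paper gives no detailed proof, stating only that the lemma follows from the Stirling generating function \eqref{Stirling-gen} and the definitions \eqref{sk-def}, \eqref{Fk-def}, which is precisely the computation you carry out (the sign bookkeeping in the Stirling step and the exponential-of-a-sum expansion both check out).
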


In 1966, Lehmer~\cite[Theorem 3]{Lehmer1966} showed, for $n \geq 2$, that the derivative values $\Phi_n^{(k)}(1)/\Phi_n(1) \in \Z$ are expressed as a polynomial on the Jordan totient functions $J_1(n), \dots, J_k(n)$ over $\Q$ (see also \cite{AkiyamaKaneko2022}). 

\begin{lemma}\label{Lehmer-thm}
	For any $n \geq 2$, we have 
	\[
		\frac{\Phi_n^{(k)}(1)}{\Phi_n(1)} = F_k \left(\frac{\phi(n)}{2}, \frac{J_2(n)}{4}, \dots, \frac{J_k(n)}{2k} \right).
	\]
\end{lemma}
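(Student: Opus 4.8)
The plan is to work with the exponential generating function of the normalized derivative values. Since $\Phi_n(1+t)/\Phi_n(1) = \sum_{k=0}^\infty \frac{\Phi_n^{(k)}(1)}{\Phi_n(1)}\frac{t^k}{k!}$, and since by \cref{generating-thm} we have $P(\bm{x},t) = \sum_{k=0}^\infty F_k(x_1,\dots,x_k)\frac{t^k}{k!}$, it suffices to prove the power series identity
\[
\frac{\Phi_n(1+t)}{\Phi_n(1)} = P\left(\bm{x}, t\right)\Big|_{x_m = J_m(n)/(2m)}
\]
and then compare coefficients of $t^k$. Because $P = \exp(-Q)$, this reduces to matching the two \emph{logarithms}, which has the effect of turning the product over divisors of $n$ into a sum.

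First I would rewrite the cyclotomic polynomial so that the limit $x \to 1$ is manifestly regular. Using $\Phi_n(x) = \prod_{d\mid n}(x^d-1)^{\mu(n/d)}$ together with $\sum_{d\mid n}\mu(n/d) = 0$ for $n \geq 2$, I can pull out the common vanishing factor $x-1$ and write, for $g_d(t) := \frac{(1+t)^d - 1}{dt}$ (so that $g_d(0) = 1$),
\[
\frac{\Phi_n(1+t)}{\Phi_n(1)} = \prod_{d\mid n} g_d(t)^{\mu(n/d)}, \qquad \log\frac{\Phi_n(1+t)}{\Phi_n(1)} = \sum_{d\mid n}\mu(n/d)\log g_d(t).
\]

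The key computation is to expand $\log g_d(t)$ in a way that exposes the Bernoulli numbers and the powers $d^m$. The crucial substitution is $1+t = e^s$, i.e.\ $s = \log(1+t)$, under which $(1+t)^d-1 = e^{ds}-1$ and $g_d = \frac{e^{ds}-1}{ds}\cdot\frac{s}{e^s-1}$. The generating function \eqref{Bernoulli-def} (equivalently \eqref{Bernoulli-int}) gives $\log\frac{e^{us}-1}{us} = us + \sum_{m\geq 1}\frac{B_m}{m}\frac{(us)^m}{m!}$, hence
\[
\log g_d = (d-1)s + \sum_{m=1}^\infty \frac{B_m(d^m-1)}{m}\frac{s^m}{m!}.
\]
Summing against $\mu(n/d)$ and using $\sum_{d\mid n}\mu(n/d)d^m = J_m(n)$ with $\sum_{d\mid n}\mu(n/d) = 0$ (valid since $n\geq 2$) collapses every $-1$ term and converts $d^m$ into $J_m(n)$; combining the $m=1$ contribution with the linear term and discarding the vanishing odd Bernoulli numbers $B_m = 0$ ($m\geq 3$) yields
\[
\log\frac{\Phi_n(1+t)}{\Phi_n(1)} = \frac{\phi(n)}{2}s + \sum_{\nu=1}^\infty \frac{B_{2\nu}J_{2\nu}(n)}{2\nu}\frac{s^{2\nu}}{(2\nu)!}.
\]
On the other hand, from \cref{generating-thm}, $-Q(\bm{x},t) = x_1\log(1+t) + 2\sum_{\nu\geq 1}\frac{B_{2\nu}}{(2\nu)!}(\log(1+t))^{2\nu}x_{2\nu}$; substituting $x_1 = \phi(n)/2$ and $x_{2\nu} = J_{2\nu}(n)/(4\nu)$ reproduces exactly the right-hand side above. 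Thus $\log(\Phi_n(1+t)/\Phi_n(1)) = \log P|_{x_m = J_m(n)/(2m)}$, and exponentiating then comparing $t^k$-coefficients gives the claim.

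I expect the main obstacle to be the middle computation: getting $\log g_d$ right (in particular tracking the $m=1$ term, which mixes with the $(d-1)s$ term to produce the factor $\phi(n)/2$ rather than $\phi(n)$) and confirming that the M\"obius-weighted sums are precisely the Jordan totients. The reparametrization $1+t = e^s$ is what makes the Bernoulli numbers appear, and one must verify that the two generating functions agree after the scaling $x_m \mapsto J_m(n)/(2m)$, paying attention to the factor of $2$ and to the sign convention $(-\log(1+t))^{2\nu} = (\log(1+t))^{2\nu}$. Finally, the regularity at $x=1$ (that $\Phi_n(1)\neq 0$ and the factorization into the $g_d$) rests essentially on $n\geq 2$, so that hypothesis must be invoked explicitly.
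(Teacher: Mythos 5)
Your proof is correct, but note that the paper itself offers no proof of this lemma: it is quoted as Lehmer's theorem \cite[Theorem 3]{Lehmer1966}, with the alternative proof of Herrera-Poyatos--Moree \cite{HerreraPoyatosMoree2021} also cited, so your argument should be compared against Lehmer's original route rather than an internal one. Lehmer works with the power sums $\sum_\zeta (1-\zeta)^{-j}$ over the primitive $n$-th roots of unity (this is what the polynomials $s_j$ of \eqref{sk-def} encode) and passes to $F_k$ via the exponential formula; you instead take logarithms of the M\"obius product $\Phi_n(x)=\prod_{d\mid n}(x^d-1)^{\mu(n/d)}$, reparametrize $1+t=e^s$, expand via the Bernoulli series, and match the result with $-Q$ from \cref{generating-thm}. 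All steps check out: your expansion $\log\frac{e^{us}-1}{us}=us+\sum_{m\ge 1}\frac{B_m}{m}\frac{(us)^m}{m!}$ is equivalent to \eqref{Bernoulli-int} after factoring $\frac{e^{x}-1}{x}=e^{x/2}\frac{\sinh(x/2)}{x/2}$, and is consistent with the paper's convention $B_1=-1/2$; the $m=1$ bookkeeping that produces $\phi(n)/2$ rather than $\phi(n)$ is right; and the M\"obius-weighted sums are exactly \eqref{Jordan-def} together with $\sum_{d\mid n}\mu(n/d)=0$ for $n\geq 2$. The one ingredient you use without stating it is $\prod_{d\mid n}d^{\mu(n/d)}=\Phi_n(1)$, which is hidden in the identity $\Phi_n(1+t)/\Phi_n(1)=\prod_{d\mid n}g_d(t)^{\mu(n/d)}$; it follows at once by cancelling the powers of $t$ in $\prod_{d\mid n}(dt\,g_d(t))^{\mu(n/d)}$ and evaluating at $t=0$ (or quote \cite[Remark 2]{AkiyamaKaneko2022}, as the paper does elsewhere). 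It is worth observing that your three key facts are precisely the ones the paper itself deploys in the proof of \cref{omega-rational-poly}, so your argument fits the paper's toolkit, and it has the genuine benefit of making the lemma self-contained instead of resting on a citation.
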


Besides the original proof by Lehmer, another proof by Herrera-Poyatos--Moree~\cite{HerreraPoyatosMoree2021} has been known recently. 

To prove \cref{main1}, we first provide a proof for the unproven Lehmer observation given in \cite[Section 7]{Lehmer1966}. First, the polynomials $\Omega_m$, for which Lehmer observed only the first few terms, are generally determined as follows.

\begin{definition}\label{Omega-def}
	We define the polynomials $\Omega_m(x_2, x_4, \dots, x_{2m}) \in \Q[x_2, x_4, \dots, x_{2m}]$ by the generating series
	\begin{align*}
		1 + 2 \sum_{m=1}^\infty \frac{B_{2m}}{(2m)!} \Omega_m(x_2, \dots, x_{2m}) u^{2m} := \exp \left(2 \sum_{\nu=1}^\infty \frac{B_{2\nu}}{(2\nu)!} \left(2 \sinh^{-1} \left(\frac{u}{2}\right)\right)^{2\nu} x_{2\nu} \right).
	\end{align*}
\end{definition}

The first few examples of the polynomials $\Omega_m$ are
\begin{align*}
	\Omega_1(x_2) &= x_2,\\
	\Omega_2(x_2, x_4) &= x_4 - 5x_2(x_2-1),\\
	\Omega_3(x_2, x_4, x_6) &= x_6 - 7x_4(x_2-1) + \frac{35}{3} x_2(x_2 - 1)(x_2 - 2) + \frac{14}{3} x_2,
\end{align*}
which coincide with the table in \cite{Lehmer1966}.

\begin{theorem}\label{Lehmer-obs}
	We have
	\[
		F_k(x_1, \dots, x_k) = (x_1)_k + 2 \sum_{m=1}^{\lfloor k/2 \rfloor} B_{2m} {k \choose 2m} (x_1 - m)_{k-2m} \Omega_m(x_2, \dots, x_{2m}).
	\]
\end{theorem}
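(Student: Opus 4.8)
The plan is to establish the formula by matching generating functions, taking \cref{generating-thm} as the starting point: by \eqref{F-gen} the claim is equivalent to identifying the coefficient of $t^k/k!$ in the series $P(\bm{x},t)$ of \eqref{F-gen exp}, so I would argue entirely at the level of formal power series in $t$.

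The crux is that the exponential factor in $P(\bm{x},t)$ is exactly the defining series of the $\Omega_m$ in \cref{Omega-def}, read after a change of variable. Concretely, I would set $u := -t/\sqrt{1+t}$, so that $u^2 = t^2/(1+t)$, and verify the formal identity $2\sinh^{-1}(u/2) = -\log(1+t)$. Writing $\sigma(s) := 2\sinh(s/2) = s + O(s^3)$, a direct computation gives $\sigma(-\log(1+t)) = (1+t)^{-1/2} - (1+t)^{1/2} = -t/\sqrt{1+t}$; since $\sigma$ and $s \mapsto 2\sinh^{-1}(s/2)$ are mutually inverse formal power series, applying $\sigma^{-1}$ yields the claimed identity. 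As only even powers of $u$ occur, I only ever need $u^2 = t^2/(1+t)$, so the sign of $u$ is immaterial and every substitution is a legitimate operation on power series in $t$.

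Feeding this substitution into \cref{Omega-def} turns its right-hand side into the exponential factor of \eqref{F-gen exp} and its left-hand side into an explicit series, giving
\[
	P(\bm{x}, t) = (1+t)^{x_1} + 2 \sum_{m=1}^\infty \frac{B_{2m}}{(2m)!} \Omega_m(x_2, \dots, x_{2m}) (1+t)^{x_1 - m} t^{2m}.
\]
I would then expand $(1+t)^{x_1}$ and $(1+t)^{x_1-m}$ by the binomial theorem \eqref{Pochhammer} and extract the coefficient of $t^k/k!$: the first summand contributes $(x_1)_k$, while the $m$-th summand contributes $2 B_{2m} \binom{k}{2m} (x_1-m)_{k-2m} \Omega_m(x_2, \dots, x_{2m})$, the factor $t^{2m}$ forcing $k - 2m \geq 0$ and hence the range $1 \leq m \leq \lfloor k/2 \rfloor$. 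Summing gives precisely the asserted expression for $F_k$.

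I expect the only genuine obstacle to be the verification of the change of variable $2\sinh^{-1}(u/2) = -\log(1+t)$ and the attendant check that all composition and multiplication steps remain valid in the ring of formal power series; once this is secured, the remaining coefficient extraction via \eqref{Pochhammer} is routine.
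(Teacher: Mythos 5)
Your proposal is correct and follows essentially the same route as the paper: the paper's proof also rests on the substitution identity $\log(1+t) = 2\sinh^{-1}\bigl(t/(2\sqrt{1+t})\bigr)$ (your sign-flipped version is equivalent since $\sinh^{-1}$ is odd and only $u^2 = t^2/(1+t)$ enters), followed by rewriting $P(\bm{x},t)$ via \cref{Omega-def} and extracting coefficients with \eqref{Pochhammer}. The only difference is that you explicitly verify the change-of-variable identity by inverting $\sigma(s)=2\sinh(s/2)$, which the paper states without proof.
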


\begin{proof}
    Since
	\[
		\log(1+t) = 2 \sinh^{-1} \left(\frac{t}{2 \sqrt{1+t}} \right)
	\]
	holds, comparing with the definition of $\Omega_m$ and the generating function for $F_k$ \eqref{F-gen exp}, \eqref{F-gen} implies
	\[
		P(\bm{x}, t) = (1+t)^{x_1} + 2 \sum_{m=1}^\infty \frac{B_{2m}}{(2m)!} \Omega_m(x_2, \dots, x_{2m}) t^{2m} (1+t)^{x_1-m}.
	\]
	Finally, \eqref{Pochhammer} gives
	\[
		P(\bm{x}, t) = \sum_{k=0}^\infty \left((x_1)_k + 2 \sum_{m=1}^{\lfloor k/2 \rfloor} B_{2m} {k \choose 2m} (x_1-m)_{k-2m} \Omega_m(x_2, \dots, x_{2m}) \right) \frac{t^k}{k!},
	\]
	which concludes the proof.
\end{proof}

\begin{corollary}[{\cref{main1}}]\label{main-cor}
	For any non-negative integer $k$, $F_{2k+1}(x_1, \dots, x_{2k+1})$ is divisible by $x_1 - k$ in $\Q[x_1, \dots, x_{2k+1}]$.
\end{corollary}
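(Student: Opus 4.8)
The plan is to reduce the claimed divisibility to a single evaluation and then to read it off from \cref{Lehmer-obs}. View $\Q[x_1,\dots,x_{2k+1}]$ as the univariate polynomial ring $R[x_1]$ over the integral domain $R := \Q[x_2,\dots,x_{2k+1}]$. By the factor (remainder) theorem over the commutative ring $R$, a polynomial $f(x_1)\in R[x_1]$ is divisible by $x_1-k$, where $k\in\Q\subseteq R$ is a constant, if and only if $f(k)=0$ in $R$. Hence it suffices to show that the specialization $F_{2k+1}(k, x_2, \dots, x_{2k+1})$ is the zero polynomial in $x_2,\dots,x_{2k+1}$.

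Next I would substitute $x_1 = k$ into the closed form supplied by \cref{Lehmer-obs}. With $K = 2k+1$ we have $\lfloor K/2 \rfloor = k$, so
\begin{align*}
	F_{2k+1}(x_1, \dots, x_{2k+1}) = (x_1)_{2k+1} + 2 \sum_{m=1}^{k} B_{2m} \binom{2k+1}{2m} (x_1-m)_{2k+1-2m}\, \Omega_m(x_2,\dots,x_{2m}).
\end{align*}
The key observation is that every factorial symbol appearing here has \emph{odd} length $2(k-m)+1$ once $x_1=k$ is inserted: the summand indexed by $m$ contains $(x_1 - m)_{2k+1-2m}$, which specializes to $(k-m)_{2(k-m)+1}$, while the leading term $(x_1)_{2k+1}$ specializes to $(k)_{2k+1}$, i.e.\ the case $r=k$ of the symbol $(r)_{2r+1}$.

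The central elementary fact is then that for every integer $r \ge 0$ one has $(r)_{2r+1} = 0$. Indeed, $(r)_{2r+1} = \prod_{j=0}^{2r}(r-j)$ runs symmetrically from $r$ down to $-r$, and therefore contains the factor $r - r = 0$. Applying this with $r = k-m \in \{0,1,\dots,k\}$ throughout the range $1 \le m \le k$, as well as with $r=k$ for the leading term, shows that every summand vanishes at $x_1 = k$ irrespective of the values of $x_2,\dots,x_{2m}$ (in particular the factor $\Omega_m$ is harmless). Consequently $F_{2k+1}(k, x_2,\dots,x_{2k+1}) = 0$, and the factor theorem yields the divisibility of $F_{2k+1}$ by $x_1-k$.

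I do not anticipate a genuine obstacle here: once \cref{Lehmer-obs} is established, the corollary is essentially the two-line evaluation above. The only point requiring a moment's care is to confirm that the vanishing is uniform in $m$, namely that the zero factor is present in $(k-m)_{2(k-m)+1}$ across the full range and for the leading term alike; this is immediate from the inequalities $0 \le k-m \le 2(k-m)$, which locate the index $j=k-m$ of the vanishing factor inside the product.
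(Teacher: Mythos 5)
Your proof is correct and takes essentially the same route as the paper: both arguments rest on the closed form in \cref{Lehmer-obs} together with the observation that each falling factorial $(x_1)_{2k+1}$ and $(x_1-m)_{2k+1-2m}$ contains the factor $x_1-k$ (equivalently, vanishes at $x_1=k$), since $m \le k \le 2k-m$. Your factor-theorem framing via evaluation at $x_1 = k$ is just a reformulation of the paper's direct divisibility statement, so there is no substantive difference.
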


\begin{proof}
	In the expression of $F_{2k+1}(x_1, \dots, x_{2k+1})$ in \cref{Lehmer-obs}, the factorial notations $(x_1)_{2k+1}$ and $(x_1 - m)_{2k+1-2m}$ are divisible by $x_1 - k$.
\end{proof}

\section{Akiyama--Kaneko's congruences}\label{Section-4}

Akiyama--Kaneko discovered congruences on the odd-order derivative values $\Phi_n^{(2k+1)}(1) \in \Z$.

\begin{theorem}[{\cite[Theorem 3]{AkiyamaKaneko2022}}]\label{AK-theorem}
	We have the following.
	\begin{enumerate}
		\item $2\Phi_n^{(3)}(1)$ is divisible by $\phi(n) - 2$.
		\item Suppose that $k \geq 2$. Then $\Phi_n^{(2k+1)}(1)$ is divisible by $\phi(n) - 2k$.
	\end{enumerate}
\end{theorem}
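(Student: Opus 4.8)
The plan is to combine Lehmer's formula (\cref{Lehmer-thm}) with the factorization established in \cref{main-cor}, and then reduce both divisibilities to a single integrality statement with a sharp $2$-adic component. Throughout I may assume $n \geq 3$ and $\phi(n) > 2k$: if $\phi(n) < 2k+1$ then $\Phi_n^{(2k+1)}(1) = 0$ since $\deg \Phi_n = \phi(n)$, while if $\phi(n) = 2k$ the claimed divisor vanishes, so both cases are trivial. Since $\phi(n)$ is even for $n \geq 3$, I set $N := \phi(n)/2 \in \Z_{\geq k+1}$. By \cref{main-cor} we may write $F_{2k+1}(x_1, \dots, x_{2k+1}) = (x_1 - k)\,G(x_1, \dots, x_{2k+1})$ with $G \in \Q[x_1, \dots, x_{2k+1}]$, and \cref{Lehmer-thm} then gives
\[
	\Phi_n^{(2k+1)}(1) = (N - k)\,\Phi_n(1)\, G\!\left(\tfrac{\phi(n)}{2}, \tfrac{J_2(n)}{4}, \dots\right).
\]
Because $\phi(n) - 2k = 2(N-k)$, statement (1) is equivalent to $\Phi_n(1)\,G(\cdots) \in \Z$, and statement (2) is equivalent to $\Phi_n(1)\,G(\cdots) \in 2\Z$ for $k \geq 2$.

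Next I would make $G$ explicit by dividing the formula of \cref{Lehmer-obs} by $x_1 - k$, each factorial notation being divisible by $x_1-k$ exactly as in the proof of \cref{main-cor}:
\[
	G = \prod_{\substack{0 \leq j \leq 2k \\ j \neq k}} (x_1 - j) + 2\sum_{m=1}^{k} B_{2m}\binom{2k+1}{2m}\Bigg(\prod_{\substack{m \leq j \leq 2k-m \\ j \neq k}} (x_1 - j)\Bigg)\Omega_m(x_2, \dots, x_{2m}).
\]
Writing $G(\cdots) = A + 2T$, where $A := \prod_{j \neq k}(N - j)$ is the leading product at $x_1 = N$ and $T$ is the Bernoulli sum, note that $A$ is a product of the $2k$ integers $N-j$ with $j \in \{0, \dots, 2k\}\setminus\{k\}$; a parity count shows that the number of even factors among them is at least $k-1$, hence at least one as soon as $k \geq 2$. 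For $k=1$, by contrast, $A = N(N-2)$ is odd precisely when $N$ is odd. This dichotomy is exactly why statement~(1) must carry the extra factor $2$ while statement~(2) does not, and it localizes the entire difficulty in the integral and $2$-adic control of $T$.

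For the sum $T$ I would invoke the von Staudt--Clausen theorem, which yields $\nu_2(B_{2m}) = -1$, so that $2B_{2m}$ is $2$-integral (indeed a $2$-adic unit): the factor $2$ in front of the sum exactly neutralizes the $2$ in the denominators of the Bernoulli numbers, leaving only odd denominators. What then remains is to show that the consecutive-integer products $\prod_{j \neq k}(N-j)$, together with the values $\Omega_m\!\left(J_2(n)/4, \dots, J_{2m}(n)/(4m)\right)$, supply (i) enough even factors to make each term of $2T$ even when $k \geq 2$, and (ii) enough cancellation that all surviving odd-prime denominators—arising both from the residual Bernoulli denominators and from the rational coefficients of $\Omega_m$ evaluated at $J_{2j}(n)/(4j)$—are absorbed by $\Phi_n(1)$ when $n$ is an odd prime power, and cancel outright otherwise. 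Here I would use that $\Phi_n(1)$ equals a prime $p$ when $n = p^a$ and equals $1$ otherwise, the evenness and further $2$-power divisibility of $J_{2m}(n)$ for $n \geq 3$, and the explicit shape of the $\Omega_m$ recorded after \cref{Omega-def}.

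The main obstacle is precisely this last integrality property of $G$ at the totient arguments: establishing sharp lower bounds for $\nu_2\big(\Omega_m(\text{totient args})\big)$ and for the $2$-adic valuations of the consecutive-integer products, while simultaneously tracking the odd-prime denominators against $\Phi_n(1)$. I expect the cleanest route is to isolate a single self-contained lemma asserting that $\Phi_n(1)\,G(\text{totient args})$ is an integer with explicitly computable parity; granting this, \cref{AK-theorem}(1) and~(2) follow immediately from the equivalences of the first paragraph together with the parity dichotomy of the second.
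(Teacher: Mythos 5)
Your reduction is sound, and it mirrors the paper's own strategy: factor $F_{2k+1} = (x_1-k)G$ via \cref{main-cor}, evaluate with \cref{Lehmer-thm}, expand $G$ through \cref{Lehmer-obs}, and split off the leading product $A = \prod_{j\neq k}(N-j)$, whose parity dichotomy (even for $k \geq 2$, possibly odd for $k=1$) correctly explains the extra factor $2$ in part (1). But there is a genuine gap, and you name it yourself: everything hinges on the deferred ``self-contained lemma'' that $\Phi_n(1)\,G(\text{totient args})$ is an integer with controlled parity. That lemma is precisely the paper's \cref{main2}, which rests on \cref{integrality}: the quantity $\frac{2B_{2m}}{(2m)!}\,\omega_m(n)$ is an integer for all $1 \leq m < \phi(n)/2$, where $\omega_m(n)$ is $\Omega_m$ evaluated at the Jordan-totient arguments. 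Without it, your $T$ is merely a rational number and neither divisibility follows; ``granting this'' is granting the entire difficulty of the theorem.

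Moreover, the tools you propose for closing the gap would not suffice. Von Staudt--Clausen controls only the prime $2$ in the denominator of $B_{2m}$; it says nothing about the odd primes in the denominator of $B_{2m}/(2m)!$ (every prime $p$ with $(p-1)\mid 2m$ occurs, so $3$ always does, and $(2m)!$ contributes more), nor about the denominators of the coefficients of $\Omega_m$ (e.g.\ $\Omega_3$ has coefficients $\tfrac{35}{3}$ and $\tfrac{14}{3}$) evaluated at the rational points $J_{2j}(n)/(4j)$. These denominators neither ``cancel outright'' term by term nor get ``absorbed by $\Phi_n(1)$'' in any direct sense: integrality holds only for the specific combination $\frac{2B_{2m}}{(2m)!}\omega_m(n)$, and the paper's proof of this is global and structural rather than a valuation count. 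It identifies the generating series $1 + 2\sum_m \frac{B_{2m}}{(2m)!}\omega_m(n)x^{2m}$ with $W_n(x)/\Phi_n(1)$, where $W_n(x) = \prod_{1\le k<n/2,\ (k,n)=1}\bigl(x^2 + 4\sin^2(\pi k/n)\bigr)$ (\cref{omega-rational-poly}, proved via Chebyshev polynomials and M\"obius inversion), shows $W_n(x) \in \Z[x]$ by Gauss's lemma, and then proves that every non-leading coefficient of $W_n$ is divisible by $\Phi_n(1)$ using the congruence $W_{p^r}(x) \equiv x^{\phi(p^r)} \pmod{p}$. To complete your argument you would need to supply this lemma (or an equivalent); the term-by-term $2$-adic bookkeeping you outline cannot replace it, and that is exactly where all the work in the paper lies.
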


More strongly, they observed that $\Phi_n^{(2k+1)}(1)/\Phi_n(1)$ is likely to be divisible by $\phi(n) - 2k$ for $k \geq 2$. Their congruences suggest a direct relationship with the divisibility shown in the previous section, but it should be noted that \cref{main-cor} does not immediately yield \cref{AK-theorem}. Our second theorem (\cref{main2}) gives a certain integrality property of the polynomials $F_k(x_1, \dots, x_k)$, which relates them directly. We begin by preparing useful series of lemmas.

\begin{lemma}
	For any integer $n \geq 1$, we define the polynomial $V_n(x) \in \Q[x]$ by
	\begin{align}\label{V-def}
		V_n(x) := \prod_{1 \leq k < n/2} \left(x^2 + 4 \sin^2 \left(\frac{\pi k}{n}\right) \right)
	\end{align}
	with $V_1(x) = V_2(x) = 1$. Then $V_n(x) \in \Z[x]$ and we have
	\begin{align}\label{V-Chebyshev}
		\sinh(n \theta) = \begin{cases}
			\sinh \theta V_n(2\sinh \theta) &\text{if } n: \text{odd},\\
			\sinh 2\theta V_n(2\sinh \theta) &\text{if } n: \text{even}.
		\end{cases}
	\end{align}
\end{lemma}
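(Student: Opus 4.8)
The plan is to pass to the exponential variable $w = e^\theta$, in which $\sinh(m\theta) = (w^m - w^{-m})/2$ for every $m$, and to factor the right-hand side over roots of unity. Writing $W = w^2$ and $\zeta = e^{2\pi i/n}$, the starting point is an elementary rewriting of a single factor of $V_n$. Using $4\sinh^2\theta = 2\cosh(2\theta) - 2$ and $4\sin^2(\pi k/n) = 2 - 2\cos(2\pi k/n)$, one obtains, after the substitution $x = 2\sinh\theta$,
\[
	x^2 + 4\sin^2\!\left(\tfrac{\pi k}{n}\right) = W + W^{-1} - \zeta^k - \zeta^{-k} = W^{-1}(W - \zeta^k)(W - \zeta^{-k}).
\]
This factor-by-factor identification is the crucial step: it converts the defining product of $V_n$ into a product over $n$-th roots of unity.

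First I would establish the $\sinh$ identity by multiplying this over $1 \le k < n/2$ and tracking the index set. For odd $n$, the pairs $\{k, n-k\}$ with $1 \le k \le (n-1)/2$ exhaust all nonzero residues modulo $n$, so $\prod_{1\le k<n/2}(W-\zeta^k)(W-\zeta^{-k}) = \prod_{k=1}^{n-1}(W - \zeta^k) = (W^n-1)/(W-1)$; collecting the $(n-1)/2$ factors $W^{-1}$ yields $V_n(2\sinh\theta) = w^{-(n-1)}(w^{2n}-1)/(w^2-1)$. Multiplying by $\sinh\theta = (w-w^{-1})/2$ and using $(w-w^{-1})/(w^2-1) = w^{-1}$ telescopes everything to $(w^n-w^{-n})/2 = \sinh(n\theta)$. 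For even $n$ the only change is that the residue $k = n/2$, with $\zeta^{n/2} = -1$, is now absent, so the product becomes $(W^n-1)/(W^2-1)$ and multiplying by $\sinh(2\theta) = (w^2-w^{-2})/2$ again telescopes to $\sinh(n\theta)$. Since $2\sinh\theta$ ranges over all real numbers, these equalities of analytic functions of $\theta$ are equivalent to the asserted polynomial identities.

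Next I would deduce the integrality $V_n \in \Z[x]$ from the identity just proved, rather than confronting the algebraic numbers $4\sin^2(\pi k/n)$ directly. Applying the sum-to-product formula $\sinh((n+2)\theta) + \sinh((n-2)\theta) = 2\cosh(2\theta)\sinh(n\theta)$ and dividing by the common prefactor (namely $\sinh\theta$ for odd $n$ and $\sinh(2\theta)$ for even $n$) gives, via $2\cosh(2\theta) = x^2 + 2$, the three-term recurrence
\[
	V_{n+2}(x) = (x^2 + 2)\,V_n(x) - V_{n-2}(x).
\]
Since the base cases $V_1 = 1$, $V_3 = x^2 + 3$ in the odd chain and $V_2 = 1$, $V_4 = x^2 + 2$ in the even chain all lie in $\Z[x]$, an immediate induction yields $V_n \in \Z[x]$ for every $n$.

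The main obstacle is purely combinatorial: correctly matching the index set $\{1 \le k < n/2\}$ of the defining product against a full (odd case) or punctured (even case) set of $n$-th roots of unity, and in particular recognizing that the residue missing in the even case is exactly the one absorbed by the larger prefactor $\sinh(2\theta)$. Once the factor-by-factor identity is in hand, both the $\sinh$ formula and, through the recurrence, the integrality follow by short formal manipulations.
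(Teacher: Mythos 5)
Your proof is correct, but it takes a genuinely different route from the paper. The paper never manipulates the product over $\sin^2(\pi k/n)$ directly: it introduces auxiliary polynomials $\widetilde{V}_n(x)$ built from the Chebyshev polynomials $T_n$ and $U_{n-1}$ evaluated at imaginary arguments, obtains integrality from the known explicit coefficient formulas for Chebyshev polynomials (citing Erd\'elyi), and then proves $\widetilde{V}_n = V_n$ by observing that both are monic with the same zero set $\{2i\sin(\pi k/n)\}$. You instead prove the hyperbolic identity directly for $V_n$ as defined, by passing to $w = e^\theta$ and factoring each term as $x^2 + 4\sin^2(\pi k/n) = W^{-1}(W-\zeta^k)(W-\zeta^{-k})$ with $W = w^2$, so that the product telescopes to $(W^n-1)/(W-1)$ (odd $n$) or $(W^n-1)/(W^2-1)$ (even $n$); integrality then follows from the three-term recurrence $V_{n+2}(x) = (x^2+2)V_n(x) - V_{n-2}(x)$, which you correctly extract from the sum-to-product formula, together with the base cases $V_1, V_2, V_3, V_4 \in \Z[x]$. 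Your argument is more self-contained and elementary, needing no Chebyshev theory and no zero-matching step. What the paper's route buys in exchange is the explicit coefficient formula \eqref{V-explicit} for $V_n(x)$, which is not a luxury: it is reused later in the proof of \cref{integrality} to establish the congruence $V_{p^r}(x) \equiv x^{p^r-1} \pmod{p}$. With your approach that congruence would need a separate argument (e.g.\ by induction on the recurrence modulo $p$, or from the factorization over roots of unity), so if your proof were substituted into the paper, the later lemma would have to be adjusted accordingly.
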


\begin{proof}
	We first recall two kinds of Chebyshev polynomials $T_n(x), U_n(x) \in \Z[x]$ characterized by $T_n(\cos \theta) = \cos n\theta$ and $U_{n-1}(\cos \theta) = \sin n\theta /\sin \theta$. The characterizations immediately lead to the following.
	\begin{align*}
		\sinh(n \theta) &= \begin{cases}
			(-1)^{\frac{n+1}{2}} i T_n(i \sinh \theta) &\text{if } n: \text{odd},\\
			(-1)^{\frac{n}{2}} i \cosh \theta U_{n-1}(i \sinh \theta) &\text{if } n: \text{even}.
		\end{cases}
	\end{align*}
	Let define the polynomials $\widetilde{V}_n(x)$ by
	\[
		\widetilde{V}_n(x) := \begin{cases}
			\displaystyle{\frac{2(-1)^{\frac{n+1}{2}} i}{x} T_n \left(\frac{ix}{2}\right)} &\text{if } n: \text{odd},\\
			\displaystyle{\frac{(-1)^{\frac{n}{2}} i}{x} U_{n-1} \left(\frac{ix}{2}\right)} &\text{if } n: \text{even}.
		\end{cases}
	\]
	It is then enough to show that $\widetilde{V}_n(x) \in \Z[x]$ and $\widetilde{V}_n(x) = V_n(x)$. By the known explicit formulas for the Chebyshev polynomials 
	(see~\cite[Section 10.11 (22), (23)]{Erdelyi1953}), 
	we have
	\begin{align}\label{V-explicit}
		\widetilde{V}_n(x) = \begin{cases}
			\displaystyle{\sum_{m=0}^{\frac{n-1}{2}} \frac{n(n-m-1)!}{m!(n-2m)!} x^{n-2m-1}} &\text{if } n: \text{odd},\\
			\displaystyle{\sum_{m=0}^{\frac{n}{2}-1} {n-m-1 \choose m} x^{n-2m-2}} &\text{if } n: \text{even}.
		\end{cases}
	\end{align}
	Since $\frac{n(n-m-1)!}{m!(n-2m)!} = {n-m \choose m} + {n-m-1 \choose m-1}$, we see that $\widetilde{V}_n(x) \in \Z[x]$. As for the coincidence, since both $V_n(x)$ and $\widetilde{V}_n(x)$ are monic polynomials, it suffices to check the locations of their zeros. For an odd $n$, the set of zeros of $\widetilde{V}_n(x)$ are given by $\{2i \cos \frac{(2k+1) \pi}{2n} \in \C \mid 0 \leq k < n\} \setminus \{0\}$, which coincides with the set $\{2i \sin \frac{\pi k}{n} \in \C \mid 0 < |k| \leq \frac{n-1}{2}\}$ of zeros of $V_n(x)$. A similar argument works for even $n$.
\end{proof}

\begin{lemma}\label{omega-rational-poly}
	For any integer $n \geq 1$, we define the polynomial $W_n(x) \in \Q[x]$ by
	\begin{align*}
		W_n(x) &:= \prod_{\substack{1 \leq k < n/2 \\ (k,n)=1}} \left(x^2 + 4 \sin^2 \left(\frac{\pi k}{n} \right) \right)
	\end{align*}
	with $W_1(x) = W_2(x) = 1$. Then for any $n \geq 3$, we have
	\[
		\frac{1}{\Phi_n(1)} W_n(x) = 1 + 2 \sum_{m=1}^\infty \frac{B_{2m}}{(2m)!} \omega_m(n) x^{2m},
	\]
	where we put $\omega_m(n) = \Omega_m(J_2(n)/4, \dots, J_{2m}(n)/2m) \in \Q$.
\end{lemma}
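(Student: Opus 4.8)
The plan is to pass from the primitive product $W_n$ to the full product $V_n$ of \eqref{V-def}, for which the Chebyshev identity \eqref{V-Chebyshev} is available, and then to read off the coefficients by means of the Bernoulli generating series \eqref{Bernoulli-int}. First I would establish the divisor factorisation
\[
	V_n(x) = \prod_{d \mid n} W_d(x).
\]
Grouping the factors of \eqref{V-def} according to $g = (k,n)$ and writing $k = g\ell$, $d = n/g$, each factor $x^2 + 4\sin^2(\pi k/n)$ equals $x^2 + 4\sin^2(\pi \ell/d)$ with $(\ell,d)=1$ and $1 \le \ell < d/2$; hence the factors sharing a fixed $d$ reassemble $W_d(x)$ (the divisors $d=1,2$ contribute nothing, consistently with $W_1=W_2=1$). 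Möbius inversion then gives $W_n(x) = \prod_{d \mid n} V_d(x)^{\mu(n/d)}$.

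Next I would substitute $x = 2\sinh\theta$ — a legitimate invertible substitution of formal power series, since $2\sinh\theta = 2\theta + O(\theta^3)$ — and take logarithms. Writing $g(s) := \log(\sinh s/s) = \sum_{\nu \ge 1} \frac{B_{2\nu}2^{2\nu}}{2\nu\,(2\nu)!}\, s^{2\nu}$, which is \eqref{Bernoulli-int} after the rescaling $t = 2s$, the identity \eqref{V-Chebyshev} yields
\[
	\log V_d(2\sinh\theta) - \log V_d(0) = g(d\theta) - g(\epsilon_d \theta),
\]
where $\epsilon_d = 1$ for odd $d$ and $\epsilon_d = 2$ for even $d$. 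Summing with the weights $\mu(n/d)$ and extracting the coefficient of $\theta^{2\nu}$, the terms $g(d\theta)$ produce $\frac{B_{2\nu}2^{2\nu}}{2\nu\,(2\nu)!}\sum_{d\mid n}\mu(n/d)d^{2\nu} = \frac{B_{2\nu}2^{2\nu}}{2\nu\,(2\nu)!}J_{2\nu}(n)$ by \eqref{Jordan-def}, whereas the correction
\[
	\sum_{d \mid n}\mu(n/d)\epsilon_d^{2\nu} = \sum_{d\mid n}\mu(n/d) + (2^{2\nu}-1)\sum_{\substack{d\mid n\\ d\text{ even}}}\mu(n/d)
\]
vanishes for every $n \ge 3$.

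It remains to identify the constant term: pairing $k \leftrightarrow n-k$ gives $\Phi_n(1) = \prod_{1\le k<n/2,\,(k,n)=1}|1-e^{2\pi i k/n}|^2 = W_n(0)$ for $n \ge 3$, so that $\log W_n(0) = \log\Phi_n(1)$. Collecting the above, I obtain
\[
	\log\frac{W_n(2\sinh\theta)}{\Phi_n(1)} = 2\sum_{\nu \ge 1}\frac{B_{2\nu}}{(2\nu)!}(2\theta)^{2\nu}\,\frac{J_{2\nu}(n)}{4\nu}.
\]
Exponentiating and substituting back $\theta = \sinh^{-1}(x/2)$, so that $2\theta = 2\sinh^{-1}(x/2)$, the right-hand side becomes exactly the generating series of \cref{Omega-def} with $x_{2\nu} = J_{2\nu}(n)/(4\nu)$, the substitution dictated by \cref{Lehmer-thm}; by the definition of $\omega_m(n)$ this is the asserted expansion.

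The step I expect to be the main obstacle is the bookkeeping forced by the even/odd dichotomy in \eqref{V-Chebyshev}: one must verify that the parity-sensitive correction genuinely vanishes, which rests on the elementary identities $\sum_{d\mid n}\mu(n/d)=0$ and $\sum_{d\mid n,\,2\mid d}\mu(n/d)=0$ for $n \ge 3$ (the latter requiring a short case analysis on the $2$-adic valuation of $n$). Everything else reduces to routine manipulations of formal power series.
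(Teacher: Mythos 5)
Your proposal is correct and takes essentially the same route as the paper: both rest on the factorization $V_n(x) = \prod_{d \mid n} W_d(x)$ and its M\"{o}bius inversion, the Chebyshev identity \eqref{V-Chebyshev}, the Bernoulli series \eqref{Bernoulli-int}, the M\"{o}bius-sum expression \eqref{Jordan-def} for $J_{2\nu}(n)$, and the vanishing of the parity-restricted M\"{o}bius sums for $n \geq 3$; you merely run the computation in the opposite direction (logarithmically, from $W_n$ toward the generating series) where the paper exponentiates the series defining $\omega_m(n)$ and arrives at $W_n(x)/\Phi_n(1)$. The only minor variation is the constant term: you prove $W_n(0) = \Phi_n(1)$ directly via the pairing $k \leftrightarrow n-k$, while the paper instead invokes the equivalent identity $\prod_{d \mid n} d^{\mu(n/d)} = \Phi_n(1)$.
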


\begin{proof}
	By \cref{Omega-def} and \eqref{Jordan-def}, we have
	\begin{align*}
		1 + 2 \sum_{m=1}^\infty \frac{B_{2m}}{(2m)!} \omega_m(n) x^{2m} &= \exp \left( \sum_{m=2}^\infty \frac{B_m}{m!} \left(2 \sinh^{-1} \left(\frac{x}{2}\right)\right)^m \frac{J_m(n)}{m} \right)\\
			&= \prod_{d \mid n} \exp \left(\sum_{m=2}^\infty \frac{B_m}{m \cdot m!} \left(2d \sinh^{-1} \left(\frac{x}{2}\right) \right)^m \right)^{\mu(n/d)}.
	\end{align*}
	Moreover, by applying \eqref{Bernoulli-int}, \eqref{V-Chebyshev}, and the facts that
	\begin{align*}
		\sum_{\substack{d \mid n \\ d \equiv j\ (2)}} \mu \left(\frac{n}{d}\right) = 0, \quad \prod_{d \mid n} d^{\mu(n/d)} = \Phi_n(1)
	\end{align*}
	for $n \geq 3$ and $j \in \{0,1\}$ (see \cite[Remark 2]{AkiyamaKaneko2022}), we reach the simple expression
	\begin{align*}
		1 + 2 \sum_{m=1}^\infty \frac{B_{2m}}{(2m)!} \omega_m(n) x^{2m} &= \frac{1}{\Phi_n(1)} \prod_{d \mid n} \left(\sinh \left(d \sinh^{-1} \left(\frac{x}{2} \right) \right)\right)^{\mu(n/d)}\\
			&= \frac{1}{\Phi_n(1)} \prod_{d \mid n} V_d (x)^{\mu(n/d)}.
	\end{align*}
	By applying the M\"{o}bius inversion formula to $V_n(x) = \prod_{d \mid n} W_d(x)$, we get
	\begin{align}\label{Mobius-inver}
		\prod_{d \mid n} V_d(x)^{\mu(n/d)} = W_n(x),
	\end{align}
	which concludes the proof.
\end{proof}

\begin{lemma}\label{integrality}
	For any integer $n \geq 3$ and $1 \leq m < \phi(n)/2$, we have
	\[
		\frac{2B_{2m}}{(2m)!} \omega_m(n) \in \Z.
	\]
\end{lemma}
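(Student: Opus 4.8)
The plan is to recognize the quantity in question as a single Taylor coefficient and thereby reduce the claim to an elementary divisibility statement about $W_n(x)$. By \cref{omega-rational-poly}, the number $\frac{2B_{2m}}{(2m)!}\omega_m(n)$ is precisely the coefficient of $x^{2m}$ in $\frac{1}{\Phi_n(1)} W_n(x)$. Writing $N := \phi(n)/2$ and $a_k := 4\sin^2(\pi k/n)$ for the $N$ values indexed by $1 \le k < n/2$ with $(k,n)=1$, we have $W_n(x) = \prod_k (x^2 + a_k)$, so this coefficient equals $\frac{1}{\Phi_n(1)} e_{N-m}(a_1, \dots, a_N)$, where $e_j$ is the $j$-th elementary symmetric polynomial. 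Since $1 \le m < N$ corresponds to $1 \le N-m \le N-1$, that is, to every coefficient of $W_n(x)$ strictly between the constant term and the leading term, it suffices to prove: (i) $W_n(x) \in \Z[x]$, and (ii) $\Phi_n(1)$ divides each non-leading coefficient of $W_n(x)$.

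For (i), I would argue by induction on $n$ using the factorization $V_n(x) = \prod_{d \mid n} W_d(x)$ recorded in the proof of \cref{omega-rational-poly}, together with the fact $V_n(x) \in \Z[x]$ established earlier. Indeed, all $W_d(x)$ are monic, so writing $W_n(x) = V_n(x) \big/ \prod_{d \mid n,\, d<n} W_d(x)$ and invoking the inductive hypothesis, $W_n(x)$ is the quotient of an integer polynomial by a monic integer polynomial, hence lies in $\Z[x]$; the base cases $W_1 = W_2 = 1$ are immediate.

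For (ii), recall that for $n \ge 2$ one has $\Phi_n(1) = p$ when $n = p^a$ is a prime power and $\Phi_n(1) = 1$ otherwise, so the non-prime-power case is settled by (i) alone. When $n = p^a$, I would pass to the ring $\Z[\zeta_n]$ with $\zeta_n = e^{2\pi i/n}$, in which $p$ is totally ramified with prime $\mathfrak p = (1-\zeta_n)$ and residue field $\Z[\zeta_n]/\mathfrak p \cong \mathbb F_p$. Using $4\sin^2(\pi k/n) = (1-\zeta_n^k)(1-\zeta_n^{-k})$ and $1-\zeta_n^{\pm k} \in \mathfrak p$, every $a_k$ lies in $\mathfrak p$, so reducing coefficients modulo $\mathfrak p$ gives $W_n(x) \equiv x^{2N} = x^{\phi(n)} \pmod{\mathfrak p}$. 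Hence each non-leading coefficient of $W_n(x)$ lies in $\mathfrak p \cap \Z = p\Z$, and being a rational integer it is divisible by $p = \Phi_n(1)$, which is exactly (ii).

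I expect the divisibility step (ii) to be the crux: it is where the arithmetic of $\Phi_n(1)$ genuinely enters, via the split into prime-power and non-prime-power $n$ and the total ramification of $p$ in $\Q(\zeta_{p^a})$. The integrality (i) and the bookkeeping identifying the quantity with a specific coefficient are comparatively routine. One point deserving care is that the congruence $W_n(x) \equiv x^{\phi(n)} \pmod{\mathfrak p}$ only controls the coefficients modulo $\mathfrak p$, and the passage to divisibility by the rational prime $p$ must be justified through $\mathfrak p \cap \Z = p\Z$ (equivalently, through $\Z[\zeta_n]/\mathfrak p \cong \mathbb F_p$), which is precisely where total ramification is used.
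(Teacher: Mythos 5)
Your proof is correct, and it shares the paper's overall skeleton---identify $\frac{2B_{2m}}{(2m)!}\omega_m(n)$ with a coefficient of $\frac{1}{\Phi_n(1)}W_n(x)$, prove $W_n(x) \in \Z[x]$, and then show that $\Phi_n(1)$ divides every non-leading coefficient, which is trivial unless $n$ is a prime power---but your treatment of the crucial prime-power case is genuinely different. The paper stays elementary: from \eqref{Mobius-inver} it writes $W_{p^r}(x)V_{p^{r-1}}(x) = V_{p^r}(x)$ and reads off from the explicit Chebyshev-type formula \eqref{V-explicit} that $V_{p^r}(x) \equiv x^{p^r-1} \pmod{p}$ for odd $p$ (resp.\ $V_{2^r}(x)\equiv x^{2^r-2} \pmod 2$), then cancels the power of $x$ to conclude $W_{p^r}(x) \equiv x^{\phi(p^r)} \pmod{p}$. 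You instead pass to $\Z[\zeta_n]$ and use that $4\sin^2(\pi k/n) = (1-\zeta_n^k)(1-\zeta_n^{-k})$ lies in the totally ramified prime $\mathfrak{p} = (1-\zeta_n)$, so that $W_n(x) \equiv x^{\phi(n)} \pmod{\mathfrak{p}}$ coefficient-wise, and the non-leading coefficients, being rational integers by your step (i), lie in $\mathfrak{p}\cap\Z = p\Z$. Your route is shorter and more conceptual---it explains the congruence by the location of the roots of $W_n$ and needs no case split between odd $p$ and $p=2$---at the cost of importing standard ramification theory for cyclotomic fields, whereas the paper's argument is self-contained given the explicit formula \eqref{V-explicit} it had already derived to prove $V_n(x)\in\Z[x]$. (A minor further difference: for the integrality of $W_n$ the paper invokes Gauss' lemma applied to $V_n(x) = \prod_{d \mid n} W_d(x)$, while you run an induction dividing by monic integer polynomials; both are valid.)
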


\begin{proof}
	By $V_n(x) = \prod_{d \mid n} W_d(x)$ and $V_n(x) \in \Z[x]$, Gauss' lemma implies $W_n(x) \in \Z[x]$. Thus it suffices to show that all coefficients of $W_n(x)$ except for the highest degree term $x^{\phi(n)}$ are divisible by $\Phi_n(1)$. We recall that the value $\Phi_n(1)$ is explicitly given by
	\[
		\Phi_n(1) = \begin{cases}
			p &\text{if } n = p^r,\ p\text{ is prime},\\
			1 &\text{if otherwise}.
		\end{cases}
	\]
	Thus we now assume that $n = p^r$ is a power of a prime number $p$. By \eqref{Mobius-inver}, we have $W_{p^r}(x)V_{p^{r-1}}(x) = V_{p^r}(x)$. By the explicit formula \eqref{V-explicit}, $V_{p^r}(x) \equiv x^{p^r-1} \pmod{p}$ for an odd $p$ and $V_{2^r}(x) \equiv x^{2^r-2} \pmod{2}$. Thus, for an odd prime $p$, we have $W_{p^r}(x) x^{p^{r-1}-1} \equiv x^{p^r-1} \pmod{p}$, which implies that $W_{p^r}(x) \equiv x^{\phi(p^r)} \pmod{p}$. The same argument works for $p=2$.
\end{proof}

\begin{theorem}\label{main2}
	For any integers $n \geq 3$ and $1 \leq k < \phi(n)$, the polynomial $F_{k,n}(x)$ defined by $F_{k,n}(x) = F_k(x, J_2(n)/4, \dots, J_k(n)/2k)$ is in $\Z[x]$.
\end{theorem}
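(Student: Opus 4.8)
The plan is to obtain the claim by specializing Lehmer's observation (\cref{Lehmer-obs}) in the even-indexed variables and then invoking the integrality statement of \cref{integrality}. The key structural feature is that, in the formula
\[
	F_k(x_1, \dots, x_k) = (x_1)_k + 2 \sum_{m=1}^{\lfloor k/2 \rfloor} B_{2m} \binom{k}{2m} (x_1 - m)_{k-2m} \Omega_m(x_2, \dots, x_{2m}),
\]
the free variable $x_1$ enters only through the factorial notations, while the polynomials $\Omega_m$ involve only the even-indexed variables $x_2, x_4, \dots$. Hence, substituting $x_j = J_j(n)/(2j)$ for $j \geq 2$ while keeping $x_1 = x$ free turns the argument of each $\Omega_m$ into precisely the tuple defining $\omega_m(n) = \Omega_m(J_2(n)/4, \dots, J_{2m}(n)/(2 \cdot 2m))$ from \cref{omega-rational-poly}, yielding
\[
	F_{k,n}(x) = (x)_k + 2 \sum_{m=1}^{\lfloor k/2 \rfloor} B_{2m} \binom{k}{2m} (x-m)_{k-2m}\, \omega_m(n).
\]

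Next I would separate the $x$-dependence from the scalar coefficients. Both factorial notations are integer-coefficient polynomials in $x$: indeed $(x)_k = x(x-1)\cdots(x-k+1) \in \Z[x]$ and, similarly, $(x-m)_{k-2m} = (x-m)(x-m-1)\cdots(x-k+m+1) \in \Z[x]$, each being a product of monic linear factors with integer constant terms. It therefore remains only to check that every scalar $2 B_{2m} \binom{k}{2m} \omega_m(n)$ is an integer. For this I would rewrite the binomial coefficient so as to isolate the quantity controlled by \cref{integrality}:
\[
	2 B_{2m} \binom{k}{2m} \omega_m(n) = \frac{2 B_{2m}}{(2m)!}\, \omega_m(n) \cdot \frac{k!}{(k-2m)!},
\]
where the second factor $\frac{k!}{(k-2m)!} = k(k-1)\cdots(k-2m+1)$ is manifestly an integer and the first factor is exactly the number shown to lie in $\Z$ by \cref{integrality}.

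The one point that genuinely needs the hypothesis, and which I expect to be the crux of the bookkeeping rather than a serious obstacle, is verifying that the constraint $1 \leq m < \phi(n)/2$ of \cref{integrality} is met for every index appearing in the sum, namely for $1 \leq m \leq \lfloor k/2 \rfloor$. This is where $k < \phi(n)$ is used: for each such $m$ one has $m \leq \lfloor k/2 \rfloor \leq k/2 < \phi(n)/2$, so \cref{integrality} applies uniformly to all terms. Combining the three observations — the integrality of each factorial notation in $x$, the integrality of each scalar coefficient $2 B_{2m} \binom{k}{2m} \omega_m(n)$, and the integrality of the leading term $(x)_k$ — shows that $F_{k,n}(x)$ is an integer combination of polynomials in $\Z[x]$, hence $F_{k,n}(x) \in \Z[x]$, which completes the proof.
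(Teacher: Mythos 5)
Your proposal is correct and follows essentially the same route as the paper: the paper's proof likewise specializes \cref{Lehmer-obs} to obtain $F_{k,n}(x) = (x)_k + \sum_{m=1}^{\lfloor k/2 \rfloor} \frac{k!}{(k-2m)!} \frac{2B_{2m}}{(2m)!} \omega_m(n) (x-m)_{k-2m}$ and then invokes \cref{integrality}. Your write-up merely makes explicit the bookkeeping the paper leaves implicit, namely the factorization of $2B_{2m}\binom{k}{2m}$ and the verification that $m \leq k/2 < \phi(n)/2$ puts every term within the range of \cref{integrality}.
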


\begin{proof}
	By \cref{Lehmer-obs}, the polynomial $F_{k,n}(x)$ is given by
	\begin{align}\label{Fkn-explicit}
		F_{k,n}(x) = (x)_k + \sum_{m=1}^{\lfloor k/2 \rfloor} \frac{k!}{(k-2m)!} \frac{2B_{2m}}{(2m)!} \omega_m(n) (x-m)_{k-2m}.
	\end{align}
	\cref{integrality} implies that, for any $1 \leq k < \phi(n)$, $F_{k,n}(x) \in \Z[x]$.
\end{proof}

Finally, we show Akiyama--Kaneko's observation, a refined version of \cref{AK-theorem} as a corollary of \cref{main2}. 

\begin{corollary}
	We have the following.
	\begin{enumerate}
		\item $2 \Phi_n^{(3)}(1)/\Phi_n(1)$ is divisible by $\phi(n)-2$.
		\item Suppose that $k \geq 2$. Then $\Phi_n^{(2k+1)}(1)/\Phi_n(1)$ is divisible by $\phi(n) - 2k$.
	\end{enumerate}
\end{corollary}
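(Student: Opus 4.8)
The plan is to combine the three structural results already in hand: Lehmer's formula (\cref{Lehmer-thm}), the factorization $x_1 - k \mid F_{2k+1}$ (\cref{main-cor}), and the integrality of $F_{k,n}$ (\cref{main2}). Write $N := \phi(n)/2$, which is a positive integer for $n \geq 3$, so that \cref{Lehmer-thm} reads $\Phi_n^{(2k+1)}(1)/\Phi_n(1) = F_{2k+1,n}(N)$. If $\phi(n) \leq 2k$, then $2k+1 > \deg \Phi_n = \phi(n)$, so $\Phi_n^{(2k+1)}(1) = 0$ and both assertions hold trivially; since $\phi(n)$ is even, I may therefore assume $2k+1 < \phi(n)$. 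In this range \cref{main2} gives $F_{2k+1,n}(x) \in \Z[x]$, while \cref{main-cor} gives a factorization $F_{2k+1,n}(x) = (x-k)G_n(x)$ in $\Q[x]$. As $x - k$ is monic over $\Z$, division of $F_{2k+1,n}$ by it stays in $\Z[x]$, so $G_n(x) \in \Z[x]$. Evaluating at $x = N$ and using $N - k = (\phi(n)-2k)/2$ gives
\[
    \frac{\Phi_n^{(2k+1)}(1)}{\Phi_n(1)} = (N-k)\,G_n(N) = \frac{\phi(n)-2k}{2}\,G_n(N), \qquad G_n(N) \in \Z.
\]
Multiplying by $2$ proves part (1) at once (the case $k=1$), and reduces part (2) to a single parity statement: for $k \geq 2$, the integer $G_n(N)$ is \emph{even}.

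To analyse the parity I would make $G_n$ explicit. Starting from \eqref{Fkn-explicit} and grouping the factors of each falling factorial symmetrically about $x - k$, one obtains
\[
    G_n(x) = \prod_{j=1}^{k}\bigl((x-k)^2 - j^2\bigr) + \sum_{m=1}^{k} c_m \prod_{j=1}^{k-m}\bigl((x-k)^2 - j^2\bigr), \qquad c_m := \frac{(2k+1)!}{(2k+1-2m)!}\cdot\frac{2B_{2m}}{(2m)!}\,\omega_m(n),
\]
where \cref{integrality} guarantees $\tfrac{2B_{2m}}{(2m)!}\omega_m(n) \in \Z$ (every relevant $m$ satisfies $m \leq k < \phi(n)/2$), so each $c_m \in \Z$. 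Now specialise $x = N$, so that $x - k = N - k = (\phi(n)-2k)/2 =: d \in \Z$, and reduce modulo $2$ via $a^2 \equiv a \pmod 2$, which turns each factor $d^2 - j^2$ into $d + j$.

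The terms then sort into consecutive-integer products. For $0 \le m \le k-2$ the product $\prod_{j=1}^{k-m}(d+j)$ runs over at least two consecutive integers and is even; as the $m=0$ term has coefficient $1$ and the rest have integer coefficients $c_m$, all of these vanish modulo $2$. Only the boundary terms $m = k-1$ and $m = k$ survive a priori, contributing $c_{k-1}(d+1) + c_k$. Here the factorial prefactors save the day: $c_k = (2k+1)!\,a_k$ and $c_{k-1} = \tfrac{(2k+1)!}{6}\,a_{k-1}$ with $a_{k-1},a_k \in \Z$, and for $k \geq 2$ one has $2k+1 \geq 5$, so $v_2\bigl((2k+1)!\bigr) \geq 3$ makes both $(2k+1)!$ and $(2k+1)!/6$ even. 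Hence $G_n(N) \equiv 0 \pmod 2$, completing part (2). The same bookkeeping shows the surviving term for $k=1$ is $d+1$, which need not be even, which is exactly why the factor $2$ cannot be dropped in part (1).

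The main obstacle is precisely this parity computation. The factorization of $G_n$ and the mod-$2$ reduction are routine, but one must correctly isolate the two boundary terms $m=k-1,k$ that are \emph{not} automatically even as consecutive-integer products, and then verify that their factorial coefficients $c_{k-1},c_k$ carry the needed factor of $2$ exactly when $k \geq 2$. Care is also needed to confirm that the invoked integralities ($G_n \in \Z[x]$ and $c_m \in \Z$) hold throughout the range $2k+1 < \phi(n)$, and to dispatch the degenerate range $\phi(n) \le 2k$ separately through the vanishing of the derivative.
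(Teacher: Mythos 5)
Your proposal is correct and follows essentially the same route as the paper: reduce to $3 \le 2k+1 < \phi(n)$ (the other cases being trivial since $\phi(n)$ is even and high derivatives vanish), combine \cref{main-cor} with \cref{main2} to write $F_{2k+1,n}(x) = (x-k)G_n(x)$ with $G_n(x) \in \Z[x]$, evaluate at $\phi(n)/2$, and then extract the extra factor of $2$ for $k \geq 2$ from the explicit expression \eqref{Fkn-explicit}. The only difference is that you spell out the parity computation that the paper compresses into its final sentence; your bookkeeping (even consecutive-integer products for $m \le k-2$, even factorial prefactors $c_{k-1}, c_k$ for the boundary terms) is a valid way to supply that detail.
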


\begin{proof}
	Since $\deg \Phi_n(x) = \phi(n)$ is even for $n \geq 3$, it is sufficient to consider only the cases for $n \geq 3$ and $3 \leq 2k+1 < \phi(n)$. By \cref{main-cor} and \cref{main2}, $F_{2k+1, n}(x) \in \Z[x]$ is divisible by $x-k$. Therefore, $F_{2k+1,n}(\phi(n)/2) = \Phi_n^{(2k+1)}(1)/\Phi_n(1)$ is divisible by $\phi(n)/2 - k$. In particular, if $k \geq 2$, the expression \eqref{Fkn-explicit} yields that $F_{2k+1, n}(\phi(n)/2)$ is divisible by $2(\phi(n)/2-k)$.
\end{proof}

\section*{Acknowledgements}

It is a pleasure to thank Shigeki Akiyama and Hajime Kaneko for introducing the authors to this topic and for valuable discussions.
This work was supported by JSPS KAKENHI Grant Numbers JP20K14292, JP21K18141 (Matsusaka) and JP21K13808 (Shibukawa).

\bibliographystyle{amsplain}
\bibliography{References}

\end{document}